\newcommand{\R}{\mathbb{R}}
\newcommand{\EE}{\mathcal{E}}
\newcommand{\T}{\mathcal{T}}
\newcommand{\LS}{\mathrm{LS}}
\newcommand{\TLS}{\mathrm{TLS}}
\renewcommand{\Im}{\mathrm{Im}}
\newcommand{\F}{\mathrm{F}}   
\newcommand{\qed}{\hfill}     
\newtheorem{remark}[theorem]{Remark}
\title{A Gauss--Newton iteration for 
Total Least Squares problems
\thanks{This document is a preliminary draft. 
The final publication is available at Elsevier via 
http://dx.doi.org/10.1007/s10543-017-0678-5.}
}
\author{Dario Fasino\thanks{
Dipartimento di Scienze Matematiche, Informatiche e Fisiche,
Universit\`a di Udine, Via delle Scienze 206, 33100 Udine, Italy.
 E-mail: dario.fasino@uniud.it}
\and Antonio Fazzi\thanks{
Gran Sasso Science Institute, Viale F. Crispi 7, 67100 LÕAquila, Italy. E-mail: antonio.fazzi@gssi.it}}
\date{}
\begin{document}
\maketitle

\begin{abstract}
The Total Least Squares solution of 
an overdetermined, approximate linear equation $Ax \approx b$ 
minimizes a nonlinear function which characterizes
the backward error. We show that a 
globally convergent variant of the Gauss--Newton
iteration can be tailored to compute that solution.
At each iteration, the proposed method requires
the solution of an ordinary least squares problem where the 
matrix $A$ is perturbed by a rank-one term.

\end{abstract}

\begin{keywords}Total Least Squares, Gauss--Newton method
\end{keywords}

\begin{AMS}
65F20 
\end{AMS}


\section{Introduction}

The Total Least Squares (TLS) problem 
is a well known technique 
for solving overdetermined linear systems of equations 
$$
   Ax \approx b, \qquad A \in \R^{m \times n}, 
   \qquad b \in \R^m \qquad (m > n),
$$
in which both the matrix $A$ and the right hand side $b$ are affected by errors. 
We consider 
the following classical definition of TLS problem, see e.g.,
\cite{golubvanloan,HuffelV_book}.

\begin{definition}[TLS problem]   \label{TLSdef}
The Total Least Squares problem with data
$A \in \R^{m \times n}$ 
and $b \in \R^m$,  with $m \geq n$, is  
\begin{equation}  \label{TLS}
   \min_{E, f}{{\| (E \mid f) \|}_{\F}} , \ \ \text{subject to} \  \ b+f \in \Im(A+E) ,
\end{equation}
where $E \in \R^{m \times n}$ and $f \in \R^m$.
Given a matrix $(\bar{E} \mid \bar{f})$
that attains the minimum in \eqref{TLS},
any $x \in \R^n$ such that 
$$ 
   (A+\bar{E})x= b+\bar{f}
$$ 
is called a {\em solution of the Total Least Squares problem}
\eqref{TLS}.
\end{definition}

Here and in what follows, $\|\cdot\|_{\F}$ denotes the 
Frobenius matrix norm, while
$(E \mid f)$ denotes the $m \times (n+1)$  
matrix whose first $n$ columns are the ones of $E$, and the last column is the vector $f$.

In various applicative situations where a mathematical model
reduces to the solution of an overdetermined, 
possibly inconsistent
linear equation $Ax \approx b$, solving that equation
in the TLS sense yields a more convenient approach than the ordinary least squares approach, in which the data matrix is assumed exact and 
errors are confined to the right-hand side $b$.

In the numerical linear algebra community,
the TLS problem was firstly introduced by Golub and Van Loan 
in \cite{golub,golubvanloan}, 
motivated by an extensive statistical literature on 
``orthogonal regression'', ``errors-in-variables'',
and ``measurement error'' methods and models. 
They proposed a numerical algorithm based on the 
singular value decomposition of the 
matrix $(A\mid b)$. 
That algorithm, which requires about $2mn^2+12n^3$ arithmetic
operations \cite[\S 12.3.2]{matcomp} essentially due to SVD computations,
is still today one of the reference methods for the solution 
of general TLS problems.

Van Huffel and Vandewalle \cite{vanhuffelvandewalle} 
extended the algorithm of Golub and Van Loan 
in order to deal with a wider class of TLS problems 
also in the multiple right-hand side case, namely,
problems having non-unique solutions, and the so called 
non-generic problems which have no solution in the sense of Definition \ref{TLSdef}.
Since then,
many variants and solution methods have been 
introduced on the basic TLS problem 
because of its occurrence in many different fields 
\cite{Overview07}.
For example, Bj\"{o}rck et al.\ \cite{bjorck} proposed a method for large scale TLS problems based on Rayleigh quotient iteration; and
efficient algorithms have been introduced for solving structured TLS problems 
where the data matrix $(A\mid b)$ has a particular structure (e.g., Hankel, Toeplitz) that 
must be preserved in the solution \cite{MVHP}.
Other variants of \eqref{TLS}
have been defined in terms of generic unitarily invariant norms
\cite{uin}.
We point the reader to \cite{markovsky} 
for a recent overview of the literature on the subject.

Throughout this paper 
we denote by
$$
    C= U \Sigma V^T, \qquad \Sigma = \mathrm{Diag}(\sigma_1, \dots, \sigma_n, \sigma_{n+1}),
$$
a singular value decomposition of $C = (A\mid b)$, 
with $\sigma_1 \geq \ldots \geq \sigma_{n+1}$.
Furthermore, all vector and matrix norms are $2$-norms,
unless stated otherwise.
The following well known statement characterizes the TLS solution 
along with the conditions for its existence and uniqueness  
\cite{matcomp,Overview07,HuffelV_book}.

\begin{theorem}   \label{thm:wellposed}
Let $v_{n+1}$ be the last column of the
matrix $V$ in the SVD $C = U\Sigma V^T$.
Define the partitioning
$v_{n+1} = (\hat v^T, \gamma)^T$ with $\hat v\in\R^n$
and $\gamma\in\R$.
A solution of \eqref{TLS} exists 
and is unique if and only if $\gamma \neq 0$
and $\sigma_n \ne \sigma_{n+1}$. 
If the TLS solution exists and is unique, it is given by 
$$
     x_{\TLS} = - (1/\gamma) \hat v.
$$

\end{theorem}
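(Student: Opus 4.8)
The plan is to reduce the TLS problem \eqref{TLS} to a best rank-$n$ approximation problem for the augmented matrix $C=(A\mid b)$, which is governed by the Eckart--Young--Mirsky theorem. First I would rewrite the feasibility constraint: the condition $b+f\in\Im(A+E)$ holds if and only if there exists $x\in\R^n$ with $(A+E)x=b+f$, equivalently
$$
   \bigl(C + (E\mid f)\bigr) \begin{pmatrix} x \\ -1 \end{pmatrix} = 0 .
$$
Hence every feasible perturbation $(E\mid f)$ forces the $m\times(n+1)$ matrix $C+(E\mid f)$ to be rank-deficient, i.e.\ of rank at most $n$. Since the Frobenius-norm distance of $C$ from the set of rank-$n$ matrices equals $\sigma_{n+1}$, this immediately yields the lower bound $\|(E\mid f)\|_{\F}\ge\sigma_{n+1}$ for any feasible perturbation.

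Next I would exhibit a minimizer attaining this bound. By Eckart--Young--Mirsky, the closest rank-$n$ matrix to $C$ in Frobenius norm is $\hat C = \sum_{i=1}^n \sigma_i u_i v_i^T$, so the choice $(E\mid f)=\hat C - C = -\sigma_{n+1}u_{n+1}v_{n+1}^T$ is feasible and satisfies $\|(E\mid f)\|_{\F}=\sigma_{n+1}$. The null space of $C+(E\mid f)=\hat C$ is then spanned by $v_{n+1}$. To read off a TLS solution I need a null vector whose last entry equals $-1$; since the last entry of $v_{n+1}$ is $\gamma$, this normalization is possible precisely when $\gamma\neq 0$, in which case $-(1/\gamma)\,v_{n+1}=(x_{\TLS}^T,-1)^T$ with $x_{\TLS}=-(1/\gamma)\hat v$, establishing the stated formula.

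Finally I would settle existence and uniqueness. The condition $\sigma_n\neq\sigma_{n+1}$ guarantees that the smallest singular value is simple, so the best rank-$n$ approximation $\hat C$ and its one-dimensional null space are unique; this is exactly what makes the minimizing perturbation, and hence $x_{\TLS}$, unique. The condition $\gamma\neq 0$ guarantees that this minimal perturbation is genuinely feasible, i.e.\ that its null vector can be normalized to extract $x$. For the converse I would argue that if $\sigma_n=\sigma_{n+1}$ the minimizing right-singular subspace has dimension at least two, so rotations within it produce distinct feasible solutions and uniqueness fails; while if $\gamma=0$ the optimal null vector $v_{n+1}$ has vanishing last entry, hence cannot be scaled to the form $(x^T,-1)^T$, and no feasible perturbation attains the norm $\sigma_{n+1}$ (the non-generic case). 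I expect this converse direction to be the main obstacle, since it requires a careful analysis of the degenerate singular subspace and a proof that no perturbation of norm $\sigma_{n+1}$ yields a feasible, uniquely determined $x$; by contrast, the forward direction is a direct application of Eckart--Young--Mirsky.
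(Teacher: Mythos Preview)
The paper does not supply its own proof of this theorem: it is stated as a known result with references to \cite{matcomp,Overview07,HuffelV_book}, and the argument you sketch via the Eckart--Young--Mirsky theorem is precisely the standard one found in those sources. So there is no ``paper's proof'' to compare against, but your approach is the canonical one and is essentially correct.

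Two small points on the converse. First, when $\sigma_n=\sigma_{n+1}$ the column $v_{n+1}$ (and hence $\gamma$) is not uniquely determined by the SVD, so the two conditions in the statement should be read with this in mind; your rotation argument is the right idea, but you should also note the residual case in which the whole $\sigma_{n+1}$-singular subspace lies in the hyperplane $\{y_{n+1}=0\}$, where non-uniqueness is replaced by non-existence. Second, in the case $\gamma=0$ with $\sigma_n>\sigma_{n+1}$, to conclude that no minimizer exists you need not only that the canonical perturbation $-\sigma_{n+1}u_{n+1}v_{n+1}^T$ is infeasible, but that \emph{every} perturbation of Frobenius norm $\sigma_{n+1}$ making $C+(E\mid f)$ rank-deficient is infeasible; this follows from the uniqueness part of Eckart--Young--Mirsky (valid because $\sigma_n>\sigma_{n+1}$), which forces any such perturbation to have the same one-dimensional null space $\mathrm{span}(v_{n+1})$. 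With these clarifications the converse goes through.
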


Alternative characterizations of $x_{\TLS}$ also exist,
based on the SVD of $A$, see e.g., 
\cite[Thm.\ 2.7]{HuffelV_book}.
We also mention that the two conditions appearing in the 
preceding theorem, namely, 
$\gamma \neq 0$
and $\sigma_n \ne \sigma_{n+1}$, are equivalent to the 
single inequality
$\sigma'_n > \sigma_{n+1}$, where $\sigma'_n$ is the smallest
singluar value of $A$. 
The equivalence is shown in \cite[Corollary 3.4]{HuffelV_book}.
In particular, we remark that a necessary condition for existence and uniqueness of the solution is that $A$ has maximum (column) rank.

Another popular characterization of the 
the solution of the total least squares problem with data $A$ and $b$ 
is given in terms of the function $\eta(x)$,
\begin{equation}
\label{eta}
   \eta(x) = \frac{\|Ax - b\|}{\sqrt{1 + x^Tx}} .
\end{equation}
Indeed, it was shown in \cite[Sect.\ 3]{golubvanloan} that,
under well posedness hypotheses, the 
solution $x_\TLS$ can be characterized as the 
global minimum of $\eta(x)$, by means of arguments based on the
SVD of the matrix $(A\mid b)$, and $\eta(x_\TLS) = \sigma_{n+1}$.
Actually, the function $\eta(x)$ quantifies the backward error of
an arbitrary vector $x$ as approximate solution of the equation 
$Ax = b$, as shown in the forthcoming result.

\begin{lemma}   \label{lemma:RG}
For any vector $x$ 
there exist a rank-one matrix $(\bar E\mid \bar f)$ such that
$(A+\bar E)x = b+\bar f$ and $\|(\bar E\mid \bar f)\|_{\F} = \eta(x)$.
Moreover, for every matrix $(E\mid f)$ such that
$(A+E)x = b+f$ it holds $\|(E\mid f)\|_{\F} \geq \eta(x)$.
\end{lemma}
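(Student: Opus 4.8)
The plan is to recast the constraint $(A + \bar E)x = b + \bar f$ as a single underdetermined linear system for the unknown entries of the augmented matrix $(\bar E \mid \bar f)$, and then invoke the elementary theory of minimum-norm solutions. First I would set $r = Ax - b$ and $z = (x^T, -1)^T \in \R^{n+1}$, so that the equality $(A+E)x = b+f$ is equivalent to $(E \mid f)\,z = -r$. This reformulation is the crux of the argument: it turns a constraint involving the action of $(E\mid f)$ on $x$ into a constraint on its action on the single augmented vector $z$, whose norm $\|z\| = \sqrt{1 + x^T x}$ already supplies the denominator of $\eta(x)$.

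For the existence and optimality claim, I would exhibit the candidate $(\bar E \mid \bar f) = -r z^T / (z^T z)$. A direct computation verifies $(\bar E \mid \bar f)\,z = -r$, so the constraint holds; this matrix is manifestly rank one, being an outer product, whenever $r \neq 0$, and is the zero matrix otherwise. Since it is an outer product, $\|(\bar E\mid \bar f)\|_{\F} = \|r\|\,\|z\| / \|z\|^2 = \|Ax - b\|/\sqrt{1 + x^T x} = \eta(x)$, which gives the first assertion.

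The lower bound follows from the same reformulation. For any $(E \mid f)$ satisfying $(A+E)x = b+f$, hence $(E\mid f)\,z = -r$, the chain of inequalities $\|(E\mid f)\|_{\F} \geq \|(E\mid f)\|_2 \geq \|(E\mid f)\,z\|/\|z\| = \|r\|/\|z\| = \eta(x)$ settles the claim, using that the Frobenius norm dominates the spectral norm and that the spectral norm bounds the action on any vector divided by its length.

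There is no serious obstacle here; the only points requiring care are the bookkeeping of signs in passing between $(A+E)x = b+f$ and $(E\mid f)\,z = -r$, and the degenerate case $Ax = b$, where $\eta(x) = 0$ and the zero matrix attains the bound trivially. I would also note that the minimizer is in fact unique, since each row of $(\bar E\mid\bar f)$ must coincide with the unique minimum-norm solution of the corresponding scalar linear equation, and that the rank-one structure emerges automatically because all these rows are multiples of $z^T$.
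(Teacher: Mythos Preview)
Your proof is correct and follows essentially the same route as the paper: both introduce the residual $r = Ax-b$ and the augmented vector $(x,-1)^T$, exhibit the rank-one minimizer $-r(x,-1)^T/(1+x^Tx)$, compute its Frobenius norm as $\|r\|/\sqrt{1+x^Tx}$, and obtain the lower bound via $\|\cdot\|_{\F}\geq\|\cdot\|_2$ together with the operator-norm inequality applied to the action on $(x,-1)^T$. Your additional remarks on the degenerate case $r=0$ and on uniqueness of the minimizer are not in the paper but are harmless and correct.
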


\begin{proof}
Let $r = Ax - b$  and define
$$
   \bar E = \frac{-1}{1+x^Tx}rx^T , \qquad
   \bar f = \frac{1}{1+x^Tx}r .  
$$ 
Note that
$$
   (A + \bar E) x = Ax - \frac{x^Tx}{1+x^Tx} r
   = b + \frac{1}{1+x^Tx}r = b + \bar f .
$$
Introducing the auxiliary notation $y = (x, -1)^T\in\R^{n+1}$, we have $r = (A\mid b) y$ and $y^Ty = 1 + x^Tx$, whence
$(\bar E\mid \bar f) = -ry^T /y^Ty$. 
Therefore $(E\mid f)$ has rank one, 
$$
   \|(\bar E\mid \bar f)\|_{\F} = \|(\bar E\mid\bar f)\| 
   = \frac{\|r\|}{\|y\|}
   = \frac{\|r\|}{\sqrt{1 + x^Tx}} = \eta(x),
$$
and we have the first part of the claim.
Finally, if $(A+E)x = b+f$ then
$$
   \| (E\mid f)\|_{\F} \geq  \| (E\mid f)\| \geq
   \frac{\|(E\mid f)y\|}{\|y\|} = 
   \frac{\|Ax-b\|}{\sqrt{1+x^Tx}} =\eta(x) ,
$$
and the proof is complete.
\qed
\end{proof}

Hence, under the well posedness hypotheses recalled above,
the solution $x_{\TLS}$ is characterized as the unique minimizer of the function $\eta(x)$ in \eqref{eta}. 
In this paper, 
we exploit that variational formulation of the TLS problem  
to derive an iterative method,
based on the Gauss--Newton iteration, which constructs a sequence of approximations converging to $x_{\TLS}$.

The rest of the paper is organized as follows.
In the next section 
we derive our basic algorithm and discuss 
some of its geometric and
computational properties.
In Section 3 we introduce a step size
control which guarantees convergence and 
provides precise convergence estimates, due to a rather
involved relationship between our algorithm
and an inverse power iteration with the matrix 
$C^TC$.
Next, we we present some final comments in Section 4.
The present work is completed by Appendix \ref{iteration}, which 
contains the most technical part. In fact,
our main results  
are better discussed in a rather abstract setting,
and we devote a separate place for that 
discussion, to avoid notational ambiguities.

\section{Solving TLS problems by the Gauss--Newton iteration}

As recalled before, under reasonable assumptions
the solution $x_{\TLS}$
can be characterized as the point attaining
\begin{equation}   \label{eq:min_eta}
   \min_{x\in\R^n} \eta(x) := \frac{\|Ax-b\|}{\sqrt{1+x^Tx}} .
\end{equation}
Hereafter, we show how to approximate that minimum 
by means of the Gauss--Newton method.

\subsection{The Gauss--Newton method for nonlinear least squares problems}

Let $f:\R^n \to \R^m$ be a continuously differentiable function, $m\geq n$.
Consider the unconstrained optimization problem 
\begin{equation}
\label{fgn}
   \min_{x \in \R^n} {\| f(x) \|} . 
\end{equation}
Assume that $f$ is a nonlinear  
function, and denote its Jacobian matrix by $J(x)$. 
Finding a stationary point of $\phi(x) := \| f(x) \|^2$ is equivalent to solving the equation $\nabla \phi(x)=0$.  
The Gauss--Newton algorithm \cite[\S 8.5]{ortega}
is a popular method for solving such kind of nonlinear problems which does not require computation or estimation of
the second order derivatives of $\phi$.
Rather,  
this method attempts to solve the nonlinear least squares problem 
\eqref{fgn} by means of a sequence of 
standard least squares problems obtained by the linearization
of the function $f(x)$ around the current approximation. Hence,
unlike Newton-type methods applied to the nonlinear system $\nabla\phi(x) = 0$,
the Gauss--Newton iteration does not require the Hessian matrix of $\phi(x)$
and can be implemented with just the knowledge of $f(x)$ 
and its Jacobian matrix $J(x)$, as follows.

\bigskip
\framebox[11.5cm][l]{
\begin{tabular}{ll}
\multicolumn{2}{l}{\bf Basic Gauss--Newton method}\\
{\bf Input:} & $f(x)$, $J(x)$, $x_0$;
 $\varepsilon$,  maxit (stopping criteria)   \\ 
{\bf Output:} & $\bar x$, approximate solution of \eqref{fgn} \\ 
\end{tabular}
}

\framebox[11.5cm][l]{
\begin{tabular}{l}
   Set $k := 0$, $f_0 := f(x_0)$, $J_0 := J(x_0)$ \\
    {\tt while} $\|J_k^Tf_k\| \geq \varepsilon$ 
    and $k < $ maxit \\
 \hspace{6mm} Compute $h_k := \arg\min_h \| f_k + J_kh \|$ \\
 \hspace{6mm} Set $x_{k+1} := x_k + h_k$ \\
 \hspace{6mm} Set $k := k+1$, $f_k := f(x_k)$, $J_k := J(x_k)$ \\
    {\tt end} \\
    $\bar x := x_k$
\end{tabular}
}

\bigskip
According to this procedure, the iterate
$x_{k+1}$ is obtained by replacing the 
minimization of $\| f(x_k + h)\|$ with that of the
linearized variant $\| f(x_k) + J(x_k) h\|$.
The stopping criterion exploits the 
identity 
$\nabla\phi(x) = 2J(x)^Tf(x)$, so that the smallness of the norm
of the latter could indicate nearness to a stationary point.
The resulting iteration is locally convergent to
a solution of \eqref{fgn};
if the minimum in \eqref{fgn} is positive then
the convergence rate is typically linear, 
otherwise quadratic, see e.g., \cite[\S 8.5]{ortega}.


\subsection{A basic Gauss--Newton iteration for TLS problems}

The formulation \eqref{eq:min_eta} of TLS can be 
recast as a nonlinear least squares problem in the form
\eqref{fgn}. In fact, if we set  
\begin{equation}
\label{funzeta}
   f(x)= \mu(x) (Ax-b) , \qquad
   \mu(x) = \frac{1}{\sqrt{1 + x^Tx}} , 
\end{equation}
then we have $\eta(x) = \|f(x)\|$, and the TLS solution of 
$Ax\approx b$ coincides with the minimum point of $\|f(x)\|$.
The function $f(x)$ in \eqref{funzeta} is a smooth function
whose Jacobian matrix is 
\begin{equation}
\label{jacobianaf}
   J (x)=
   \mu(x) A - \mu(x)^3 (Ax-b) x^T .
\end{equation}

A good initial point to start up the Gauss--Newton iteration 
is given by the solution of the 
standard least squares problem associated to the same data $A$ and $b$, which we denote by $x_{\LS}$. Indeed many theoretical results prove  
that $x_\LS$ and $x_\TLS$ are usually not
too far apart from each other
and the angle between them is small, see e.g., \cite[Ch.\ 6]{HuffelV_book}
and \cite{paige}.
Hereafter, we outline our adaptation of
the Gauss--Newton method to the solution of TLS problems.

\bigskip
\framebox[11.5cm][l]{
\begin{tabular}{ll}
\multicolumn{2}{l}{\bf Algorithm GN-TLS}\\
{\bf Input:} & $A,b$ (problem data); $\varepsilon$,  maxit (stopping criteria)   \\ 
{\bf Output:} & $\hat x_\TLS$, approximate solution of \eqref{TLS} \\ 
\end{tabular}
}

\framebox[11.5cm][l]{
\begin{tabular}{l}
   Set $k := 0$ \\
   Compute $x_0 := \arg\min_x \|Ax - b\|$ \\
   Compute $f_0 := f(x_0)$ and $J_0 := J(x_0)$  via \eqref{funzeta} and \eqref{jacobianaf}\\
    {\tt while} $\|J_k^Tf_k\| \geq \varepsilon$ and $k < $ maxit\\
 \hspace{6mm} Compute $h_k := \arg\min_h \| J_kh + f_k\|$ \\
 \hspace{6mm} Set $x_{k+1} := x_k + h_k$ \\
 \hspace{6mm} Set $k := k+1$\\
 \hspace{6mm} Compute $f_k := f(x_k)$ and $J_k := J(x_k)$   
 via \eqref{funzeta} and \eqref{jacobianaf}
\\
    {\tt end} \\
    $\hat x_{\TLS} := x_k$
\end{tabular}
}


\subsection{Reducing the computational cost}

The main task required at each step of the previous algorithm is the solution of a 
standard least squares problem, whose classical approach  
by means of the QR factorization 
requires a cubic cost (about $2n^2(m-\frac{n}{3})$, see 
\cite{matcomp}) in terms of arithmetic operations. 
However, the particular structure of the Jacobian matrix
\eqref{jacobianaf} allows us to reduce this cost
to a quadratic one. 
Indeed, 
apart of scaling coefficients, the matrix $J(x)$
is a rank-one modification of the data matrix $A$.
This additive structure can be exploited in the solution 
of the least squares problem $\min_h\|J_kh+f_k\|$
that yields the Gauss--Newton step at the $k$-th iteration.

Hereafter, we 
recall from \cite[\S 12.5]{matcomp} and \cite{Daniel+76}
an algorithm that 
computes the (thin) QR factorization of the matrix  $B=A+uv^T$ 
by updating a known QR factorization of $A$, also in the rectangular case.
The steps of the algorithm are the following:

\begin{itemize}
\item 
Compute  $w=Q^Tu$ so that $B=A+uv^T=Q(R+wv^T)$.

\item Compute Givens matrices $J_{m-1}, \ldots, J_1$ such that
$$
J_1 \cdots J_{m-1}w=\pm \| w \| e_1,
$$
where $J_i$ is a plane rotation on the coordinates related to the indexes $i$ and $i+1$ and $e_1$ is the first canonical vector. 
Apply the same rotations to $R$ to obtain 
$$
H=J_1 \cdots J_{m-1}R ,
$$
which is an upper Hessenberg matrix.
Hence,
$$
(J_1 \cdots J_{m-1})(R+wv^T)= H \pm \| w \| e_1 v^T = H_1,
$$
which is again an upper Hessenberg matrix.

\item 
Compute Givens matrices $G_1,\ldots,G_{n-1}$,
where $G_i$ is a plane rotation on the coordinates $i$ and $i+1$,
such that 
$$
G_{n-1} \cdots G_1 H_1 = R_1
$$
and $R_1$ is an upper triangular matrix. Finally set
$$
Q_1=Q J_{m-1}^T \cdots J_1^T G_1^T \cdots G_{n-1}^T
$$
to obtain the sought QR factorization
$$
B=A+uv^T=Q_1 R_1.
$$

\end{itemize}
By means of this procedure, 
starting from a known QR factorization of $A$,
the overall computational cost of computing the QR 
factorization of the rank-one update $B = A + uv^T$
is about $6n^2+2mn$ flops \cite{Daniel+76}. 

This procedure can be adopted in Algorithm GN-TLS
to reduce the computational cost
of the iterative part. In fact, 
the QR factorization of the data matrix $A$ can be computed once, when solving the least squares problem needed to
compute the starting value $x_0$. 
In all subsequent iterations, the
least squares problem occurring in the computation of  
$h_k$ can be attacked by updating the
QR factorization of the matrix $J_k$
by means of the aforementioned procedure.
Consequently, the computational cost
of each iteration of GN-TLS can be reduced to quadratic.


\subsection{Geometric properties}

As recalled in Section 1,
a necessary condition for existence and uniqueness of 
$x_\TLS$ is that $A$ has full column rank. Moreover,
we can safely assume that $b\notin\mathrm{Range}(A)$,
otherwise the overdetermined system $Ax\approx b$ is
consistent and the TLS problem is trivial. Hence we can restrict our attention to the case
where $C = (A\mid b)$ has full column rank.
Under this hypothesis, Algorithm GN-TLS boasts certain 
interesting geometric properties,
which are examined hereafter.

\begin{lemma}   \label{lem:ellipsoid}
Let $f(x)$ be the function in \eqref{funzeta}. If $C=(A \mid b)$
has full column rank then the image of the function $f$, $Im(f) \subset \R^m$, 
is an open subset of the ellipsoid 
$\EE = \{v\in\R^m : v^T X v =1 \}$ 
where $X=(CC^T)^+$ is the Moore--Penrose inverse of the matrix $CC^T$. 
\end{lemma}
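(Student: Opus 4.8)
The plan is to recognize $f$ as a linear image of the unit sphere, and then read off both the ellipsoid membership and the openness from that description. Using the notation $y=(x,-1)^T\in\R^{n+1}$ introduced in the proof of Lemma~\ref{lemma:RG}, I would first record that $Ax-b=Cy$ and $\mu(x)=1/\|y\|$, so that
\[
   f(x)=\mu(x)(Ax-b)=C\,\frac{y}{\|y\|}=Cz,\qquad z:=\frac{y}{\|y\|}.
\]
Thus $f(x)$ is the image under $C$ of a unit vector $z$. As $x$ ranges over $\R^n$, the vector $y$ sweeps out the affine hyperplane $\{y_{n+1}=-1\}$, so $z$ ranges exactly over the open lower hemisphere $S_-:=\{z\in S^n:z_{n+1}<0\}$ of the unit sphere $S^n\subset\R^{n+1}$. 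The map $x\mapsto z$ is a bijection onto $S_-$, with inverse $z\mapsto -z_{1:n}/z_{n+1}$; I would state this at the outset, since it identifies $\Im(f)=C(S_-)$ and in particular shows $\Im(f)\subseteq\mathrm{Range}(C)$.

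The core computation is to check that $v=Cz$ lies on $\EE$ whenever $\|z\|=1$. Taking a thin singular value decomposition $C=U_1SW^T$, where $U_1\in\R^{m\times(n+1)}$ has orthonormal columns, $S$ is diagonal and invertible (this is where I use the full-column-rank hypothesis), and $W$ is orthogonal, I get $CC^T=U_1S^2U_1^T$ and hence $X=(CC^T)^+=U_1S^{-2}U_1^T$. Substituting $v=U_1SW^Tz$ and using $U_1^TU_1=I$ together with $WW^T=I$, the quadratic form collapses to
\[
   v^TXv=z^TWS\,S^{-2}\,SW^Tz=z^Tz=\|z\|^2=1 ,
\]
so $\Im(f)\subseteq\EE$. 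I expect this to be the least delicate part once the SVD is in hand.

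The step I would treat most carefully is the meaning of ``ellipsoid'' and the topology behind the word ``open''. Since $X=(CC^T)^+$ is only positive semidefinite, with $\mathrm{Range}(X)=\mathrm{Range}(C)$, I would restrict attention to the $(n+1)$-dimensional subspace $\mathcal S:=\mathrm{Range}(C)$, on which $v^TXv$ is positive definite; then $\EE\cap\mathcal S$ is a genuine $n$-dimensional ellipsoid, and it contains $\Im(f)$ because every $Cz$ lies in $\mathcal S$. Reversing the computation above shows that $C$ maps $S^n$ \emph{onto} $\EE\cap\mathcal S$: for $v\in\EE\cap\mathcal S$ there is a unique $z$ with $v=Cz$ (as $C$ is injective), and the identity $v^TXv=1$ forces $\|z\|=1$. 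Because $C$ has full column rank it admits the continuous left inverse $C^+=(C^TC)^{-1}C^T$, so $C$ restricts to a homeomorphism from $S^n$ onto $\EE\cap\mathcal S$. Finally, $S_-$ is open in $S^n$, whence its image $\Im(f)=C(S_-)$ is open in $\EE\cap\mathcal S$, which is the claim. The main obstacle is thus not any single estimate but keeping the degenerate directions of $X$ under control, so that the openness is asserted within the correct subspace where $\EE$ is actually an ellipsoid.
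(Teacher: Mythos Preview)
Your proof is correct and follows essentially the same route as the paper: both identify $f(x)$ as $C$ applied to the unit vector $(x,-1)^T/\|(x,-1)^T\|$, verify the quadratic-form condition, and characterize the image as the $C$-image of the open hemisphere $\{z_{n+1}<0\}$. The only cosmetic difference is that the paper obtains $f(x)^TXf(x)=1$ via the pseudoinverse identity $(C^+)^TC^+=(CC^T)^+$ (computing $\|C^+f(x)\|=1$ directly) rather than writing out a thin SVD, and it leaves the openness argument implicit; your treatment of the degenerate directions of $X$ and of the homeomorphism $C:S^n\to\EE\cap\mathrm{Range}(C)$ is in fact more careful than the paper's.
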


\begin{proof}
From \eqref{funzeta} we have
\begin{equation*}
   f(x) = \mu(x) (Ax - b) = 
   \mu(x) C \begin{pmatrix} x \\-1 \end{pmatrix}.
\end{equation*}
By hypothesis, $C^+ C = I$. Hence, the $2$-norm of $C^+ f(x)$ is
$$
  \| C^+ f(x) \| = 
  \bigg\lVert \mu(x) C^+ C \begin{pmatrix} x \\-1 \end{pmatrix}
  \bigg\rVert = 
  \mu(x)  \bigg\lVert \begin{pmatrix} x \\-1 \end{pmatrix} 
  \bigg\rVert = 1,
$$
independently on $x$. Moreover, 
$$
   1 = \| C^+ f(x) \|^2 = f(x)^T (C^+)^T C^+ f(x) 
   = f(x)^T (CC^T)^+ f(x) ,
$$
due to the equation $(C^+)^T C^+ =  (CC^T)^+$, whence $f(x)\in\EE$.
On the other hand, if $v\in\mathrm{Im}(f)$ then 
the vector $y = C^+v$
must belong to the unit sphere in $\R^{n+1}$ 
and be expressed as 
$$
   y = \begin{pmatrix} x \\-1 \end{pmatrix} \bigg/
  \bigg\lVert \begin{pmatrix} x \\-1 \end{pmatrix} 
  \bigg\rVert_2
$$
for some vector $x\in\R^n$, which is possible
if and only if $y_{n+1} < 0$, 
and we have the thesis.
\qed
\end{proof}

Consequently, the sequence $\{f(x_k)\}$ generated
by Algorithm GN-TLS belongs to the ellipsoid $\EE$
introduced in the previous lemma and, if convergent,
converges toward $f(x_\TLS)$, 
which is a point on that surface closest to the origin. 
Indeed, the semiaxes of $\EE$ are oriented as the 
left singular vectors of $C$ and their lenghts
correspond to the respective singular values.

\begin{remark}   \label{rem:fyx}
For later reference, we notice that
$y_k = C^+f(x_k)$ is a unit vector on the hemisphere
$\{y \in\R^{n+1} : \|y\| = 1 , y_{n+1} < 0\}$,
and is related to $x_k$ via the equation
$$
   C^+f(x_k) = \mu(x_k) \begin{pmatrix} x_k \\-1 \end{pmatrix} .
$$
Moreover, let $\mathcal{F}:\R^{n+1}\mapsto\R^n$
be the nonlinear function
$$
   \mathcal{F}(v) = \frac{-1}{v_{n+1}}(v_1,\ldots,v_n)^T ,
   \qquad v = (v_1,\ldots,v_{n+1})^T .
$$
Then we have the inversion formula
$f_k = f(x_k) \Longleftrightarrow x_k = \mathcal{F}(C^+f_k)$.
\end{remark}

Our next result reveals that any update $f(x+h)$ can be written 
explicitly as
a linear combination of $f(x)$ and $J(x)h$.

\begin{lemma}
\label{fx+h}
For all $x$ and $h$ it holds $f(x+h) = \tau (f(x) + \theta J(x) h)$ where
\begin{equation}
\label{thetatau}
 \theta = \frac{1}{1 + \mu(x)^2 (x^T h)}, \qquad \tau = \frac{\mu(x+h)}{\mu(x)} (1 + \mu(x)^2(x^T h)).
\end{equation}
\end{lemma}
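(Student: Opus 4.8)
The plan is to compute $f(x+h)$ directly from the definition \eqref{funzeta} and manipulate it into the claimed form. Recall that $f(x) = \mu(x)(Ax - b)$ with $\mu(x) = 1/\sqrt{1 + x^Tx}$, and that the Jacobian is $J(x) = \mu(x)A - \mu(x)^3(Ax-b)x^T$. The natural starting point is
$$
   f(x+h) = \mu(x+h)\big(A(x+h) - b\big) = \mu(x+h)\big((Ax - b) + Ah\big).
$$
So $f(x+h)$ is $\mu(x+h)$ times the vector $(Ax-b) + Ah$, and the whole task reduces to showing that this bracketed vector equals a scalar multiple of $\tfrac{1}{\mu(x)}\big(f(x) + \theta J(x)h\big)$ with the $\theta$, $\tau$ from \eqref{thetatau}.

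Next I would expand $f(x) + \theta J(x)h$ using the explicit forms. Since $f(x) = \mu(x)(Ax-b)$ and $J(x)h = \mu(x)Ah - \mu(x)^3(Ax-b)(x^Th)$, collecting terms gives
$$
   f(x) + \theta J(x)h = \mu(x)\big(1 - \theta\,\mu(x)^2(x^Th)\big)(Ax-b) + \theta\,\mu(x)Ah.
$$
The key algebraic observation is the choice $\theta = 1/\big(1 + \mu(x)^2(x^Th)\big)$, which is engineered precisely so that the coefficient of $(Ax-b)$ and the coefficient of $Ah$ become proportional in the right ratio. Indeed $1 - \theta\,\mu(x)^2(x^Th) = \theta\big(1 + \mu(x)^2(x^Th)\big) - \theta\,\mu(x)^2(x^Th) = \theta$, so the bracket collapses to
$$
   f(x) + \theta J(x)h = \theta\,\mu(x)\big((Ax-b) + Ah\big).
$$
This is the crux of the argument: after substituting the specific $\theta$, both surviving terms share the common factor $\theta\,\mu(x)$, and the parenthesized vector is exactly the $(Ax-b)+Ah$ that appeared in $f(x+h)$.

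**Finally**, I would reconcile the two expressions. From the last display, $(Ax-b)+Ah = \tfrac{1}{\theta\,\mu(x)}\big(f(x) + \theta J(x)h\big)$, so
$$
   f(x+h) = \mu(x+h)\cdot\frac{1}{\theta\,\mu(x)}\big(f(x) + \theta J(x)h\big)
   = \frac{\mu(x+h)}{\theta\,\mu(x)}\big(f(x) + \theta J(x)h\big).
$$
It remains to check that the prefactor equals the stated $\tau$. Since $1/\theta = 1 + \mu(x)^2(x^Th)$, we get $\tfrac{\mu(x+h)}{\theta\,\mu(x)} = \tfrac{\mu(x+h)}{\mu(x)}\big(1 + \mu(x)^2(x^Th)\big) = \tau$, matching \eqref{thetatau} exactly. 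I do not expect any genuine obstacle here: the result is a pure identity, and the only subtle point is recognizing that the definition of $\theta$ is reverse-engineered to force the factorization in the middle step. No appeal to the full-rank hypothesis or to the SVD is needed, and $\mu(x+h)$ never has to be expanded in terms of $x$ and $h$, since it simply rides along as an overall scalar absorbed into $\tau$.
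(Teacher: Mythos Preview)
Your proof is correct and follows essentially the same approach as the paper: both are short direct computations hinging on the identity $J(x)h = \mu(x)Ah - \mu(x)^2(x^Th)f(x)$. The only cosmetic difference is direction---the paper substitutes $\mu(x)Ah = J(x)h + \mu(x)^2(x^Th)f(x)$ into the expansion of $f(x+h)$ and then factors, whereas you expand $f(x)+\theta J(x)h$ and observe that the choice of $\theta$ collapses it to $\theta\,\mu(x)\big((Ax-b)+Ah\big)$; the algebra is the same.
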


\begin{proof}
Using the expression of the Jacobian matrix in \eqref{jacobianaf}, we have
\begin{align*}
   J(x) h & = \mu(x) A h - \mu(x)^3 (x^T h) (Ax - b) \\
   & = \mu(x) A h - \mu(x)^2 (x^T h) f(x) ,
\end{align*}
whence
$$
  \mu(x) A h = J(x) h + \mu(x)^2 (x^T h) f(x).
$$
From the equality $f(x+h) = \mu(x+h) (A(x+h) - b)$,
with simple manipulations we obtain
\begin{align*}
 \frac{\mu(x)}{\mu(x+h)} f(x+h) &= \mu(x) (A(x+h) - b)  \\
                                &= \mu(x) (Ax - b) + \mu(x) A h \\
																&= f(x) + J(x) h + \mu(x)^2 (x^T h) f(x) \\
																&= (1 + \mu(x)^2 (x^T h)) f(x) + J(x) h. 
 \end{align*}
Finally,
$$
    f(x+h) = \frac{\mu(x+h)}{\mu(x)} 
    \Big((1 + \mu(x)^2 (x^T h))f(x) + J(x) h\Big)  
$$
and the claim follows.
\qed
\end{proof}

The preceding lemma allows us to acquire a geometric
view of the iterations provided by the GN-TLS method.
In fact,
both $f(x)$ and $f(x+h)$ belong to the ellipsoid $\EE$ 
given in Lemma \ref{lem:ellipsoid}. 
On the other hand, for any $h$ and $\theta$,
the point $f(x) + \theta J(x)h$
lies in the tangent space in $f(x)$ to that ellipsoid,
and is external to it.
Hence, $f(x+h)$ is the projection, or better,
the retraction of one of such points onto the ellipsoid.
Indeed, under very general hypotheses we have $|\tau| < 1$, as shown in the forthcoming lemma.

\begin{lemma}   \label{lem:tau}
Let $\tau$ be defined as in \eqref{thetatau}. Then, 
$\tau^2 \leq 1$ with equality if and only if $x$ and $h$ are parallel.
\end{lemma}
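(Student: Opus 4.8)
The plan is to evaluate $\tau^2$ directly and reduce the whole statement to a scalar inequality. Writing $a = x^Tx$, $c = x^Th$ and $d = h^Th$, and using $\mu(x)^2 = 1/(1+a)$ together with $\mu(x+h)^2 = 1/(1+a+2c+d)$, the definition \eqref{thetatau} gives
\begin{equation*}
   \tau^2 = \frac{\mu(x+h)^2}{\mu(x)^2}\,\bigl(1+\mu(x)^2 c\bigr)^2
   = \frac{(1+a+c)^2}{(1+a)(1+a+2c+d)} .
\end{equation*}
Since $1+a+2c+d = 1+\|x+h\|^2 > 0$ and $1+a = 1+\|x\|^2 > 0$, the denominator is strictly positive, so the sign of $1-\tau^2$ is governed entirely by a single numerator and no case distinctions on the sign of $\tau$ are needed.

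Next I would form the difference. Expanding the identity $(1+a)(1+a+2c+d)-(1+a+c)^2 = (1+a)d - c^2$ yields
\begin{equation*}
   1 - \tau^2 = \frac{(1+\|x\|^2)\,\|h\|^2 - (x^Th)^2}{(1+\|x\|^2)(1+\|x+h\|^2)} .
\end{equation*}
Hence $\tau^2 \le 1$ if and only if the numerator is nonnegative, and $\tau^2 = 1$ precisely when that numerator vanishes. This reduces the claim to controlling $(1+\|x\|^2)\|h\|^2 - (x^Th)^2$.

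The key step is to bound that numerator, and the clean way is the splitting
\begin{equation*}
   (1+\|x\|^2)\,\|h\|^2 - (x^Th)^2
   = \|h\|^2 + \bigl(\|x\|^2\|h\|^2 - (x^Th)^2\bigr) ,
\end{equation*}
in which the bracketed term is nonnegative by the Cauchy--Schwarz inequality and the leading term $\|h\|^2$ is trivially nonnegative. This makes $\tau^2 \le 1$ immediate, so the inequality itself is essentially Cauchy--Schwarz with an extra slack term $\|h\|^2$. I expect the equality case to be the delicate part of the argument, and this is exactly where I would be most careful: the numerator can vanish only if \emph{both} nonnegative summands vanish, and the additive $\|h\|^2$ already forces $h=0$ (after which the Cauchy--Schwarz term vanishes automatically, as in the degenerate collinear case $h = \lambda x$ with $\lambda = 0$). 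I would therefore verify the precise equality condition by substituting $h = \lambda x$ into the numerator, confirming that for $x\neq 0$ the bound is strict whenever $\lambda \neq 0$, so that equality holds exactly at the trivial step $h=0$; reconciling this computation with the stated collinearity condition is the one point I would double-check before concluding.
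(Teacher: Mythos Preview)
Your computation of $\tau^2$ and the proof of $\tau^2 \le 1$ are correct. The paper reaches the identical rational expression for $\tau^2$ but packages the Cauchy--Schwarz step more compactly: setting $v=(x,1)^T$ and $w=(x+h,1)^T$ in $\R^{n+1}$, one has directly
\[
   \tau^2 = \frac{(v^Tw)^2}{(v^Tv)(w^Tw)} \le 1,
\]
so the inequality is a single application of Cauchy--Schwarz to the lifted vectors, with no need to expand $1-\tau^2$ or split off the $\|h\|^2$ term. Your route, by contrast, makes the gap $1-\tau^2$ fully explicit, and that is precisely what lets you probe the equality case.

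On that equality case your suspicion is well founded. In the paper's formulation, equality in Cauchy--Schwarz requires $v$ and $w$ to be proportional; since both have last coordinate equal to $1$, this forces $v=w$, i.e.\ $h=0$. Your decomposition yields the same conclusion: the numerator $\|h\|^2 + (\|x\|^2\|h\|^2 - (x^Th)^2)$ vanishes only if $h=0$, and your substitution $h=\lambda x$ confirms that for $x\neq 0$, $\lambda\neq 0$ the inequality is strict. Hence the condition ``$x$ and $h$ parallel'' stated in the lemma is too loose; both arguments actually give equality if and only if $h=0$. Your careful check has sharpened, rather than failed to match, the paper's claim.
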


\begin{proof}
Recall that $\mu(x) = (1+ x^Tx)^{-1/2}$. Hence,
\begin{align*}
   \tau^2 & = \frac{\mu(x+h)^2}{\mu(x)^2} 
   \left( 1 + \mu(x)^2(x^T h) \right)^2 \\
          & = \frac{1 + x^T x}{1 + (x + h)^T(x + h)} 
          \left( 1 + \frac{(x^T h)^2}{(1 + x^T x)^2} 
          + 2 \frac{x^T h}{1 + x^T x} \right) \\
          & = \frac{(1 + x^T x)^2 + (x^T h)^2 + 2 (1 + x^T x)
          (x^T h)}{(1 + x^T x)(1 + (x + h)^T(x + h))} \\
          & = \frac{(1 + x^T(x + h))^2}
          {(1 + x^T x)(1 + (x + h)^T(x + h))}.
\end{align*}
Let $v = (x, 1)^T\in\R^{n+1}$ and 
$w = (x + h, 1)^T\in\R^{n+1}$. Cauchy--Schwartz inequality yields
$$
  \tau^2 = \frac{(v^T w)^2}{(v^T v)(w^T w)} \leq 1 ,
$$
and the proof is complete.
\qed
\end{proof}

\begin{remark}   \label{rem:eta}
During the iterations of the proposed algorithm 
the value of $\eta(x_k)$ is readily available from
the identity $\eta(x_k) = \|f(x_k)\|$. 
As pointed out in Lemma \ref{lemma:RG}, that number
quantifies the backward error in $x_k$, hence
the monitoring of the sequence $\{\eta(x_k)\}$
can be used to devise a reliable termination criterion,
as far as the minimization of the backward error is of interest.
In fact, numerical experiments suggest that
a few iterations may be sufficient to obtain a
relevant reduction of the backward error 
with respect to that of $x_0$.
\end{remark}


\section{An improved variant}

In this section we 
devise a variant of the basic GN-TLS method. 
The aim of this variant 
is twofold, namely,
to ensure convergence and
to increase the convergence speed of the iteration with respect to the first version. 

As shown by Lemma \ref{fx+h}, $f(x+h)$ is the retraction  
onto the ellipsoid
not of the Gauss--Newton step $f(x) + J(x)h$,
but rather that 
of a linear combination of $f(x)$ and $f(x) + J(x)h$. 
This fact may slow down the iteration,
since $\|f(x) + J(x)h\| < \|f(x) + \theta J(x)h\|$
unless $h = 0$. 
In particular, when $x^Th >0$ equation \eqref{thetatau}
gives us $0 < \theta < 1$, so that $f(x) + \theta J(x)h$
is a convex linear combination of $f(x)$ and $f(x) + J(x)h$.

In order to improve convergence, we introduce a 
step size parameter $\alpha$ and reformulate the iteration 
as $x\mapsto x + \alpha h$ where $h$ is the Gauss--Newton step. 
The step length is chosen so that
$f(x+\alpha h)$ is the retraction onto $\EE$
of $f(x) + J(x) h$, that is,
$$
   f(x+\alpha h) = \hat \tau [ f(x) + J(x)h]
$$
for some scalar $\hat \tau \neq 0$. 
We obtain the sought value from Lemma \ref{fx+h}
by the condition $\alpha = 1/\theta$,
$$
   \alpha = 1 + \mu(x)^2 x^T(\alpha h) ,
$$
whose solution is 
\begin{equation}   \label{eq:alpha}
   \alpha = \frac{1}{1 - \mu^2(x) \, x^Th}.
\end{equation}
In summary, our revised iteration is
described by the following pseudo-code:

\bigskip
\framebox[11.5cm][l]{
\begin{tabular}{ll}
\multicolumn{2}{l}{\bf Algorithm GN-TLS with ``optimal'' step length}\\
{\bf Input:}  & $A,b$ (problem data); $\varepsilon$,  maxit (stopping criteria)   \\ 
{\bf Output:} & $\hat x_\TLS$, approximate solution of \eqref{TLS} \\ 
\end{tabular}
}

\framebox[11.5cm][l]{
\begin{tabular}{l}
   Set $k := 0$ \\
   Compute $x_0 := \arg\min_x \|Ax-b\|$ \\
   Compute $f_0 := f(x_0)$ and $J_0 := J(x_0)$  
   via \eqref{funzeta} and \eqref{jacobianaf}\\
   {\tt while} $\|J_k^Tf_k\| \geq \varepsilon$ and $k <$ maxit\\
 \hspace{6mm} Compute $h_k := \arg\min_h \| J_kh + f_k\|$ \\
 \hspace{6mm} Compute $\alpha_k$ from \eqref{eq:alpha} \\
 \hspace{6mm} Set $x_{k+1} := x_k + \alpha_k h_k$ \\
 \hspace{6mm} Set $k := k+1$, $f_k := f(x_k)$, $J_k := J(x_k)$ \\
    {\tt end}  \\
    $\hat x_{\TLS} := x_k$
\end{tabular}
}

\bigskip
The forthcoming lemma 
collects specific geometric properties of this iteration.

\begin{lemma}   \label{lem:collect}
Let $\{x_k\}$ be the sequence generated by Algorithm GN-TLS with ``optimal'' step length. Then, for all $k = 0,1,\ldots$
\begin{enumerate}
\item $f(x_k) \in \mathcal{E} = 
\{ v\in\R^m : v^T(CC^T)^+v = 1\}$.
\item
For some constant $\hat\tau_k$ it holds
$f(x_{k+1}) = \hat\tau_k [f(x_k) + J(x_k) h_k]$
where $h_k$ is the Gauss--Newton step at the $k$-th iteration.
In particular, $f(x_{k+1})$ is orthogonal to $J(x_k)h_k$.

\item 
If $h_k\neq 0$ then $\eta(x_{k+1}) < \eta(x_k)$. 

\end{enumerate}
\end{lemma}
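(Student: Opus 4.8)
The plan is to treat the three claims in sequence, each resting on a lemma already proved. Claim 1 is immediate: every $x_k$ lies in $\R^n$, so $f(x_k)\in\mathrm{Im}(f)$, which by Lemma \ref{lem:ellipsoid} is contained in $\EE$; thus $f(x_k)^T(CC^T)^+f(x_k)=1$.

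For Claim 2 I would apply Lemma \ref{fx+h} to the \emph{actual} step of the algorithm, $h=\alpha_k h_k$. Its identity $f(x+h)=\tau(f(x)+\theta J(x)h)$ becomes, after using $J(x_k)(\alpha_k h_k)=\alpha_k J(x_k)h_k$,
$$
   f(x_{k+1})=\tau\big(f(x_k)+\theta\alpha_k J(x_k)h_k\big),
$$
with $\theta=[1+\mu(x_k)^2\alpha_k(x_k^Th_k)]^{-1}$. The whole purpose of the step length \eqref{eq:alpha} is to force $\theta\alpha_k=1$: indeed $\alpha_k=(1-\mu(x_k)^2 x_k^Th_k)^{-1}$ solves $\alpha=1+\mu(x_k)^2 x_k^T(\alpha h_k)$, i.e.\ $\theta\alpha=1$. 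This gives $f(x_{k+1})=\hat\tau_k(f(x_k)+J(x_k)h_k)$ with $\hat\tau_k:=\tau$. The orthogonality then comes from the defining property of the Gauss--Newton step: since $h_k=\arg\min_h\|J_kh+f_k\|$, the normal equations give $J_k^T(f_k+J_kh_k)=0$, so $(J_kh_k)^T(f_k+J_kh_k)=h_k^TJ_k^T(f_k+J_kh_k)=0$; as $f(x_{k+1})$ is a scalar multiple of $f_k+J_kh_k$, it is orthogonal to $J_kh_k$.

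For Claim 3 I would take norms in Claim 2 and evaluate the cross term by the same normal equations, $f_k^TJ_kh_k=-\|J_kh_k\|^2$, obtaining
\begin{equation*}
   \eta(x_{k+1})^2=\hat\tau_k^2\big(\eta(x_k)^2-\|J_kh_k\|^2\big),
\end{equation*}
since $\eta(x_k)=\|f_k\|$. Here Lemma \ref{lem:tau}, applied to the step $\alpha_k h_k$, yields $\hat\tau_k^2\le1$, while the bracket equals $\|f_k+J_kh_k\|^2\ge0$. Hence $\eta(x_{k+1})^2\le\eta(x_k)^2-\|J_kh_k\|^2$, and the strict inequality will follow as soon as $J_kh_k\neq0$ whenever $h_k\neq0$.

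The main obstacle is exactly this last point, i.e.\ showing that $J(x_k)$ has trivial kernel. I would argue directly from \eqref{jacobianaf}: if $J(x)h=0$ then, as $\mu(x)>0$, $Ah=\mu(x)^2(x^Th)(Ax-b)$. If $x^Th=0$ this reads $Ah=0$, forcing $h=0$ because $A$ has full column rank; if $x^Th\neq0$ it exhibits $b$ as a member of $\mathrm{Range}(A)$, against the standing assumption $b\notin\mathrm{Range}(A)$. Thus $\ker J(x)=\{0\}$ whenever $C=(A\mid b)$ has full column rank, so $h_k\neq0$ gives $\|J_kh_k\|>0$ and therefore $\eta(x_{k+1})<\eta(x_k)$.
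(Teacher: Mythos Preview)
Your proof is correct and follows essentially the same route as the paper: Claim~1 via Lemma~\ref{lem:ellipsoid}, Claim~2 via Lemma~\ref{fx+h} with the step $\alpha_k h_k$ and the normal equations, and Claim~3 via Lemma~\ref{lem:tau} combined with the identity $\|f_k+J_kh_k\|^2=\|f_k\|^2-\|J_kh_k\|^2$. Your explicit verification that $\ker J(x_k)=\{0\}$ under the standing full-rank assumption on $C=(A\mid b)$ is a detail the paper's proof leaves implicit, and it is needed to turn the inequality into a strict one when $h_k\neq 0$.
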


\begin{proof}
The first two claims are straightforward.
Furthermore, from Lemma \ref{lem:tau} and
the orthogonality between $f(x_{k+1})$ and $J(x_k)h_k$ we get
\begin{align*}
   \eta(x_{k+1})^2 
   & \leq \| f(x_k) + J(x_k)h_k \|^2 \\
   & = \|f(x_k)\|^2 - \| J(x_k)h \|^2 = \eta(x_k)^2 -\|J(x_k)h_k\|^2 ,
\end{align*}  
and the last claim follows.
\qed
\end{proof}

In Appendix \ref{iteration} we analyze this iteration
in a more abstract setting, for notational convenience.
We prefer to place that analysis in a separate appendix
to avoid notational ambiguities.
The final result is that 
the sequence $\{f_k\}$ generated by the foregoing algorithm
coincides with that of a power method with the matrix $(CC^T)^+$. 
This fact provides a complete understanding of the
convergence properties of the proposed algorithm.
In particular, convergence guarantee and estimates 
are inherited from this coincidence.
The main result is the following.

\begin{theorem}
Suppose that the TLS problem defined by data $A$ and $b$ 
is well posed, and 
let $\{x_k\}$ be the sequence computed by the Algorithm
GN-TLS with optimal step size. 
Then,
$$
   \| f(x_k) - f(x_\TLS) \| = 
   O((\sigma_{n+1}/\sigma_n)^{2k}) , \qquad
   |\eta(x_k) - \sigma_{n+1}| = O((\sigma_{n+1}/\sigma_n)^{4k}) .
$$
Furthermore, 
$\| x_k - x_\TLS \| = O((\sigma_{n+1}/\sigma_n)^{2k})$.
\end{theorem}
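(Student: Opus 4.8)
The plan is to reduce the three estimates to the coincidence, recalled just above, between the sequence $\{f(x_k)\}$ and a normalized power iteration governed by $(CC^T)^+$, and then read them off from the spectral picture of $C$. First I would make that coincidence precise. Since $f(x)\in\EE$ for every $x$ by Lemma~\ref{lem:ellipsoid}, the identity $f(x)^T(CC^T)^+f(x)=1$ holds identically in $x$; differentiating and using $\partial f/\partial x=J(x)$ yields $J(x)^T(CC^T)^+f(x)=0$, so the normal $(CC^T)^+f(x_k)$ to $\EE$ at $f(x_k)$ is orthogonal to $\mathrm{Range}(J(x_k))$. As $f(x_k)\in\mathrm{Range}(C)$ and $\mathrm{Range}(J(x_k))\subseteq\mathrm{Range}(C)$, the Gauss--Newton residual $f(x_k)+J(x_k)h_k$, being the orthogonal projection of $f(x_k)$ onto the complement of $\mathrm{Range}(J(x_k))$, must lie in the one-dimensional orthogonal complement of $\mathrm{Range}(J(x_k))$ inside $\mathrm{Range}(C)$, which is spanned by $(CC^T)^+f(x_k)$. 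With Lemma~\ref{lem:collect}(2) this gives $f(x_{k+1})=c_k\,(CC^T)^+f(x_k)$ for a scalar $c_k$ fixed by the requirement $f(x_{k+1})\in\EE$ --- exactly one step of the power method for $(CC^T)^+$ rescaled onto $\EE$.

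Next I would analyze this iteration in the left singular basis $u_1,\dots,u_{n+1}$ of $C$, in which $(CC^T)^+$ is diagonal with eigenvalues $\sigma_i^{-2}$. The largest eigenvalue $\sigma_{n+1}^{-2}$ is simple because $\sigma_n\neq\sigma_{n+1}$ under well-posedness, with eigenvector $u_{n+1}$, and the corresponding point of $\EE$ is the endpoint $\sigma_{n+1}u_{n+1}$ of the shortest semiaxis, which is $f(x_\TLS)$ (up to sign). The convergence theory of the normalized power iteration then gives linear convergence of $f(x_k)$ to $f(x_\TLS)$ at the rate given by the ratio of the two largest eigenvalues of $(CC^T)^+$, namely $\sigma_{n+1}^2/\sigma_n^2$, which is the first estimate $\|f(x_k)-f(x_\TLS)\|=O((\sigma_{n+1}/\sigma_n)^{2k})$.

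For the other two estimates I would write $f(x_k)=\sum_i c_i^{(k)}u_i$. Since $f(x_\TLS)=\sigma_{n+1}u_{n+1}$ has no component along $u_1,\dots,u_n$, the first estimate already forces $\sum_{i\le n}(c_i^{(k)})^2\le\|f(x_k)-f(x_\TLS)\|^2=O((\sigma_{n+1}/\sigma_n)^{4k})$ while $c_{n+1}^{(k)}\to\sigma_{n+1}$. Combining the ellipsoid constraint $\sum_i(c_i^{(k)})^2/\sigma_i^2=1$ with $\eta(x_k)^2=\|f(x_k)\|^2=\sum_i(c_i^{(k)})^2$ and eliminating $c_{n+1}^{(k)}$ yields $\eta(x_k)^2-\sigma_{n+1}^2=O((\sigma_{n+1}/\sigma_n)^{4k})$; dividing by $\eta(x_k)+\sigma_{n+1}\to2\sigma_{n+1}>0$ gives the second estimate, the doubled exponent being the usual quadratic dependence of a smooth function on the distance to its minimizer. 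Finally, Remark~\ref{rem:fyx} supplies the inversion $x_k=\mathcal{F}(C^+f(x_k))$; the last coordinate of $C^+f(x_\TLS)$ equals $-\mu(x_\TLS)\neq0$ (here well-posedness enters through $\gamma\neq0$), so $\mathcal{F}$ is differentiable there and $\mathcal{F}\circ C^+$ is locally Lipschitz, whence $\|x_k-x_\TLS\|=O(\|f(x_k)-f(x_\TLS)\|)=O((\sigma_{n+1}/\sigma_n)^{2k})$.

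The main obstacle is the first step, converting the geometric picture into the exact recursion $f(x_{k+1})\propto(CC^T)^+f(x_k)$. The delicate points are that $J(x_k)$ must have full column rank $n$, so that the orthogonal complement of $\mathrm{Range}(J(x_k))$ inside $\mathrm{Range}(C)$ is genuinely one-dimensional and equal to $\mathrm{span}\{(CC^T)^+f(x_k)\}$; that the scalar $c_k$ stays nonzero away from the solution; and that a consistent sign choice keeps $f(x_k)$ converging to $f(x_\TLS)$ rather than $-f(x_\TLS)$. These are precisely the technicalities handled in the abstract setting of Appendix~\ref{iteration}; once the recursion is granted, the three estimates reduce to the routine spectral bookkeeping above.
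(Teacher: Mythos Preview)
Your overall strategy matches the paper's: identify the iteration with a normalized power method for $(CC^T)^+$, read the first two estimates from standard spectral theory, and deduce the third via local Lipschitz continuity of $\mathcal{F}\circ C^+$. Your derivation of the recursion $f(x_{k+1})\propto(CC^T)^+f(x_k)$ --- differentiate the ellipsoid identity to obtain $J(x)^T(CC^T)^+f(x)=0$ and then count dimensions inside $\mathrm{Range}(C)$ --- differs from the paper's route, which passes to $s_k=C^+f(x_k)$ and solves the constrained problem $\min_{s_k^Tw=0}\|Cs_k+Cw\|$ via a Lagrange-multiplier lemma. Your argument is more geometric and bypasses the auxiliary constrained least-squares lemma; the paper's, in return, produces the normalization constants explicitly as Rayleigh quotients, which it then uses directly in the proof of the second estimate.

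There is, however, one genuine omission. Power-method convergence requires the starting vector to have a nonzero component along the dominant eigenvector, and here $x_0$ is the specific point $x_\LS$, not a generic one. You list sign consistency among the ``technicalities handled in the abstract setting of Appendix~\ref{iteration}'', but Theorem~\ref{thm:power} there \emph{assumes} $u_q^Tf_0>0$ as a hypothesis; it does not establish it. The paper verifies this in the main proof by the computation
\[
\frac{\sigma_{n+1}}{\mu(x_\LS)}\,u_{n+1}^Tf(x_\LS)
= v_{n+1}^TC^TC\begin{pmatrix}x_\LS\\-1\end{pmatrix}
= \gamma\, b^T(Ax_\LS-b)>0,
\]
using that $\gamma<0$ (orientation of $v_{n+1}$) and $b^T(Ax_\LS-b)=b^T(AA^+-I)b<0$. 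Without this check your argument does not exclude $u_{n+1}^Tf(x_0)=0$, in which case the power iteration would not converge to $\sigma_{n+1}u_{n+1}$ at all. This step uses the particular choice $x_0=x_\LS$ in an essential way and must be supplied.
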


\begin{proof}
Because of Lemma \ref{lem:collect}
the sequence $\{f(x_k)\}$ is a particular case of the 
generic iteration scheme introduced in Appendix \ref{iteration}.
Hereafter we prove that the hypotheses of Theorem \ref{thm:power}
are fulfilled, so that the claim will follow
from that theorem.

Firstly note that by Theorem \ref{thm:wellposed}
existence and uniqueness of $x_\TLS$ imply the inequalities
$\sigma_n > \sigma_{n+1} > 0$.
Let $v_{n+1} = (\hat v^T,\gamma)^T$
be as in the hypotheses of Theorem \ref{thm:wellposed}.
Consider $v_{n+1}$ oriented so that $\gamma < 0$ and let 
$Cv_{n+1} = \sigma_{n+1}u_{n+1}$.
Then $\mu(x_\TLS) = -\gamma$ and
\begin{align*}
     f(x_\TLS) & = \mu(x_\TLS) C 
     \begin{pmatrix} x_\TLS \\ -1 \end{pmatrix} \\
     & = 
     \frac{\mu(x_\TLS)}{-\gamma}Cv_{n+1} = 
     \sigma_{n+1}u_{n+1} .
\end{align*}    
Moreover,
\begin{align*}
   \frac{\sigma_{n+1}}{\mu(x_\LS)}u_{n+1}^Tf(x_\LS) & = 
   v_{n+1}^T C^T C \begin{pmatrix} x_\LS \\ -1 \end{pmatrix} \\
   & = v_{n+1}^T 
   \begin{pmatrix} A^TA & A^Tb \\ b^TA & b^Tb \end{pmatrix}
   \begin{pmatrix} x_\LS \\ -1 \end{pmatrix} \\
   & =  v_{n+1}^T 
   \begin{pmatrix} 0 \\ b^T(Ax_\LS - b) \end{pmatrix} 
   = \gamma \,
   b^T(Ax_\LS - b) > 0 ,  
\end{align*}
since $b^T(Ax_\LS - b) = b^T(AA^+ - I)b < 0$,
whence $u_{n+1}^Tf(x_\LS) > 0$.
Therefore, the first part of the claim is a direct consequence of 
Theorem \ref{thm:power}.

To complete the proof it suffices to show that there exists
a constant $c$ such that 
for sufficiently large $k$ we have
$\| x_k - x_\TLS \| \leq c \| f(x_k) - f(x_\TLS) \|$. 
Let $e_{n+1}$ be the last canonical vector
in $\mathbb{R}^{n+1}$.
Since $\lim_{k\to\infty} C^+f(x_k) = v_{n+1}$ we have
$$
   \lim_{k\to\infty} e_{n+1}^T C^+f(x_k) = \gamma < 0 .
$$
Therefore, for sufficiently large $k$ 
the sequence $\{C^+f(x_k)\}$ is contained into the set
$$
   \mathcal{Y} = \{ y\in\R^{n+1} : \|y\| = 1, \
   e_{n+1}^Ty \leq \gamma/2 \},
$$
which is closed and bounded. Within that set
the nonlinear function $\mathcal{F}$ introduced in 
Remark \ref{rem:fyx} is Lipschitz continuous.
Consequently, 
there exists a constant $L > 0$ such that for any
$y,y'\in\mathcal{Y}$ we have 
$\| \mathcal{F}(y) - \mathcal{F}(y')\| \leq L\|y - y'\|$. 
Finally, for sufficiently large $k$ we have
\begin{align*}
   \| x_k - x_\TLS \| & = 
   \| \mathcal{F}(C^+ f(x_k)) - \mathcal{F}(C^+ f(x_\TLS)) \| \\
   & \leq L \| C^+ ( f(x_k) - f(x_\TLS)) \| \\
   & \leq (L/\sigma_{n+1}) \| f(x_k) - f(x_\TLS) \|  ,
\end{align*}
and the proof is complete.
\qed
\end{proof}


\section{Conclusions}

We presented an iterative method for the solution of
generic TLS problems $Ax \approx b$ with single right hand side.
The iteration is based on the Gauss--Newton method
for the solution of nonlinear least squares problems,
endowed by a 
suitable starting point and step size choice that guarantee convergence.
In exact arithmetics, the method turns out to be related 
to an inverse power method
with the matrix $C^TC$.
The main task of the iterative method
consists of a sequence of ordinary least squares problems 
associated to a rank-one perturbation of the matrix $A$.
Such least squares problems can be attacked by means of
well known 
updating procedures for the QR factorization,
whose computational cost is quadratic.
Alternatively, one can consider the use of Krylov subspace methods
for least squares problems as, e.g., CGNR or QMR \cite[\S 10.4]{matcomp}, where the coefficient matrix is only involved  
in matrix-vector products;
if $A$ is sparse, the matrix-vector product 
$(A + uv^T)x$ can be implemented as $Ax + u(v^Tx)$,
thus reducing the computational core to a sparse matrix-vector product at each inner iteration.
Moreover, our method provides a measure 
of the backward error
associated to the current approximation, which is steadily decreasing during iteration.
Hence, iteration can be terminated as soon as 
a suitable reduction of that error is attained.
On the other hand, an increase of that error indicates that 
iteration is being spoiled by rounding errors.

The present work has been maily devoted to 
the construction and theoretical analysis of 
the iterative method.
Implementation details and 
numerical experiments on practical TLS problems 
will be consiedere§d in a further work.

\appendix

\section{An iteration on an ellipsoid}
\label{iteration}

The purpose of this appendix is to discuss the iteration in 
Algorithm GN-TLS with optimal step size, which is
rephrased hereafter in a more general setting.
Notations herein mirror those in the previous sections, 
with some exceptions.

Let $C\in\R^{p\times q}$ be a full column rank matrix, let 
$\mathcal{S} = \{s \in \R^q : \|s\| = 1\}$ and 
$\EE = \{y \in \R^p : y = Cs, s\in\mathcal{S} \}$.
Therefore,
$\EE$ is a differentiable manifold of $\R^p$; more precisely,
it is an ellipsoid whose (nontrivial) semiaxes 
are directed as the left singular vectors of $C$; and the 
corresponding singular values are the respective lengths.
If $p > q$ then $\mathrm{Range}(C)$ is a proper subspace of 
$\R^p$ and some semiaxes of $\EE$ vanish.

For any nonzero vector $z\in\mathrm{Range}(C)$
there exists a unique vector $y \in\EE$ such that
$z = \alpha y$ for some scalar $\alpha > 0$;
we say that $y$ is the {\em retraction} of $z$ onto $\EE$.

For any $f\in\EE$ let $\T_f$ be
the tangent space of $\EE$ in $f$. Hence, $\T_f$
is an affine $(q-1)$-dimensional subspace, and $f\in \T_f$.
If $f = Cs$ then it is not difficult to verify that 
$\T_f$ admits the following description:
$$
   \T_f = \{ f + Cw,\ s^Tw = 0\} .
$$
In fact, the map $s\mapsto Cs$ transforms 
tangent spaces of the unit sphere $\mathcal{S}$ into
 tangent spaces of $\EE$. 

Consider the following iteration:
\begin{itemize}
\item Choose $f_0 \in \EE$
\item For $k = 0,1,2,\ldots$
    \begin{itemize}
    \item Let $z_{k}$ be the minimum norm vector in $\T_{f_k}$
    \item Let $f_{k+1}$ be the retraction
of $z_k$ onto $\EE$.
    \end{itemize}
\end{itemize}
Owing to Lemma \ref{lem:collect} 
it is not difficult to recognize that the sequence
$\{f(x_k)\}$ produced by Algorithm GN-TLS with optimal step
length 
fits into the framework of the foregoing iteration.

Hereafter, we consider the behavior of the sequence
$\{f_k\}\subset\EE$
and of the auxiliary sequence
$\{s_k\}\subset\mathcal{S}$ defined by the equation $s_k = C^+ f_k$. 
We will prove
that the sequence $\{f_k\}$ is produced by a certain power method and converges to a point in $\EE$
corresponding to the smallest
(nontrivial) semiaxis, under appropriate circumstances.
In the subsequent
Theorem \ref{thm:power} we provide some convergence estimates.
To this aim,
we need the following preliminary result characterizing the solution 
of the least squares problem with a linear constraint.
 
\begin{lemma}
Let $A$ be a full column rank matrix and let $v$
be a nonzero vector. The solution $\bar x$
of the constrained least squares problem
$$
   \min_{x\, :\, v^Tx = 0}\| Ax - b\|
$$
is given by $\bar x = P x_{\mathrm{LS}}$ where
$x_{\mathrm{LS}} = A^+b$ is the solution of the 
unconstrained least squares problem and 
$$
   P = I - \frac{1}{v^T(A^TA)^{-1}v}(A^TA)^{-1}vv^T
$$
is the oblique projector onto $\langle v\rangle^\perp$
along $(A^TA)^{-1}v$.
\end{lemma}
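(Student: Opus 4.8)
The plan is to characterize the constrained minimizer through Lagrange multipliers, exploiting the full-column-rank hypothesis throughout. Since $A$ has full column rank, $A^TA$ is symmetric positive definite, so the objective $\|Ax - b\|^2$ is strictly convex; as the constraint $v^Tx = 0$ defines a linear subspace, the problem admits a unique global minimizer, which therefore coincides with the unique stationary point of the Lagrangian. I would form
$$
   L(x,\lambda) = \|Ax - b\|^2 + 2\lambda\, v^Tx ,
$$
and impose $\nabla_x L = 0$, which yields the perturbed normal equations $A^TA\,x = A^Tb - \lambda v$.

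Solving for $x$ and recalling that $(A^TA)^{-1}A^Tb = A^+b = x_{\LS}$, I obtain $x = x_{\LS} - \lambda (A^TA)^{-1}v$. Imposing the constraint $v^Tx = 0$ then fixes the multiplier,
$$
   \lambda = \frac{v^T x_{\LS}}{v^T(A^TA)^{-1}v} ,
$$
where the denominator is strictly positive because $(A^TA)^{-1}$ is positive definite and $v \neq 0$, so no division by zero can occur. Substituting this value of $\lambda$ back into the expression for $x$ produces exactly $\bar x = P x_{\LS}$ with $P$ as in the statement.

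To confirm that $P$ is the asserted oblique projector, I would verify its three defining properties. First, $P^2 = P$ follows from a short computation using $c := v^T(A^TA)^{-1}v$ as the normalizing scalar and the identity $v^T(A^TA)^{-1}v = c$. Second, $v^TP = 0$, which shows $\mathrm{Range}(P) \subseteq \langle v\rangle^\perp$; combined with the fact that $P$ restricts to the identity on $\langle v\rangle^\perp$, this gives the range. Third, $\ker P = \langle (A^TA)^{-1}v\rangle$, identifying the direction along which the projection is taken. Together these characterize $P$ as the projection onto $\langle v\rangle^\perp$ along $(A^TA)^{-1}v$.

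The computations are elementary, so I do not expect any genuine obstacle. The only points demanding care are the justification that the Lagrangian stationary point is indeed the global minimum and that the multiplier is well defined; both reduce to the invertibility and positive definiteness of $A^TA$, guaranteed by the full-column-rank assumption on $A$.
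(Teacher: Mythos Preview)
Your proof is correct and follows essentially the same Lagrange-multiplier approach as the paper: both derive the KKT conditions $A^TA\,\bar x + \lambda v = A^Tb$, $v^T\bar x = 0$, solve for $\bar x = x_{\LS} - \lambda (A^TA)^{-1}v$, and eliminate $\lambda$ via the constraint. The paper packages this as a block $2\times 2$ system solved through a block-triangular factorization, whereas you compute the gradient directly and additionally verify the projector properties of $P$; these are purely presentational differences.
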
 

\begin{proof}
Simple computations using Lagrange multipliers,
see e.g., \cite[\S 12.1]{matcomp},
prove that $\bar x$ fulfills the linear equation
$$
   \begin{pmatrix} A^TA & v \\ v^T & 0 \end{pmatrix}
   \begin{pmatrix} \bar x \\ \lambda \end{pmatrix} = 
   \begin{pmatrix} A^Tb \\ 0 \end{pmatrix} ,
$$
for some scalar $\lambda$. To solve this equation,
consider the block triangular factorization
$$
   \begin{pmatrix} A^TA & v \\ v^T & 0 \end{pmatrix} =
   \begin{pmatrix} A^TA & 0 \\ v^T & 1 \end{pmatrix}
   \begin{pmatrix} I & w \\ 0 & -v^Tw \end{pmatrix} 
$$
where $w = (A^TA)^{-1}v$.
Solving the corresponding block triangular systems we get
$$
   \begin{pmatrix} A^TA & 0 \\ v^T & 1 \end{pmatrix}
   \begin{pmatrix} x_{\mathrm{LS}} \\ -v^Tx_{\mathrm{LS}} \end{pmatrix} = 
   \begin{pmatrix} A^Tb \\ 0 \end{pmatrix} ,
$$
and
$$
   \begin{pmatrix} I & w \\ 0 & -v^Tw \end{pmatrix} 
   \begin{pmatrix} \bar x \\ \lambda \end{pmatrix} = 
   \begin{pmatrix} x_{\mathrm{LS}} \\ -v^Tx_{\mathrm{LS}} 
   \end{pmatrix} ,
$$
with $\lambda =  -v^Tx_{\mathrm{LS}}/v^Tw$ 
and
$$
   \bar x = x_{\mathrm{LS}} - \lambda w = 
   x_{\mathrm{LS}} - \frac{v^Tx_{\mathrm{LS}}}{v^T(A^TA)^{-1}v}
   (A^TA)^{-1}v .
$$  
The claim follows by rearranging terms in the last
formula. 
\qed
\end{proof}

Let $s_k\in \mathcal{S}$ and let $f_k = Cs_k$ be 
the corresponding point on $\EE$. 
The minimum norm vector
in $\mathcal{T}_{f_k}$ can be expressed as 
$z_k = f_k + Cw_k$ where 
$$
   w_k = \arg\min_{w \, :\, s_k^Tw = 0}\| f_k + Cw \| .
$$   
A straightforward application of the preceding lemma
yields the formula
\begin{align*}
   w_k & = -\Big( I - \frac{1}{s_k^T(C^TC)^{-1}s_k}(C^TC)^{-1}
         s_ks_k^T \Big) s_k \\
       & = \frac{1}{s_k^T(C^TC)^{-1}s_k}(C^TC)^{-1}s_k
          - s_k .
\end{align*}
In fact, the solution of the unconstrained problem
$\min_w \| f_k + Cw \|$ clearly is $w_\LS = -s_k$,
and $s_k^Ts_k = 1$.
Then,
the minimum norm vector in $\mathcal{T}_{f_k}$ 
admits the expression 
$$
   z_k = C(s_k + w_k) = \alpha_k C (C^TC)^{-1} s_k ,
   \qquad
   \alpha_k = \frac{1}{s_k^T(C^TC)^{-1}s_k} .
$$
Since $f_{k+1}$ is the retraction of 
$z_k$ onto $\EE$ and $C^+C = I$, we conclude that $f_{k+1} = Cs_{k+1}$ with
\begin{equation}   \label{eq:sk_pow}
    s_{k+1} = C^+ f_{k+1} = \beta_k (C^TC)^{-1} s_k ,
    \qquad \beta_k = 1/\|(C^TC)^{-1} s_k\| .
\end{equation}
Finally, 
$$
   f_{k+1} = \beta_k C(C^TC)^{-1}C^+ f_k
   = \beta_k (C^+)^TC^+ f_k = \beta_k (CC^T)^+ f_k ,
$$
as $(C^+)^TC^+ = (CC^T)^+$. 
Therefore, the sequence $\{s_k\}$ 
coincides with a sequence
obtained by the normalized inverse power method for the matrix $C^TC$,
and the sequence $\{f_k\}$ 
coincides with a properly normalized sequence
obtained by the power method for the matrix $(CC^T)^+$.
We are now in position to describe the asymptotic behavior 
of $\{f_k\}$.

\begin{theorem}   \label{thm:power}
Let $\sigma_1 \geq \ldots \geq \sigma_{q-1} > \sigma_q > 0$
be the singular values of $C$, 
and let $u_q$ be an unitary left singular vector associated 
to $\sigma_q$.
If $u_q$ is oriented so that $u_q^Tf_0 > 0$ then
$$
   \| f_k - \sigma_q u_q \| = O((\sigma_q/\sigma_{q-1})^{2k})
   ,\qquad
   | \| f_k \| - \sigma_q | = O((\sigma_q/\sigma_{q-1})^{4k}) .
$$
\end{theorem}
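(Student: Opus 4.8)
The plan is to read the recurrence \eqref{eq:sk_pow} as the normalized inverse power iteration for the symmetric positive definite matrix $C^TC$, and then import the classical convergence analysis of the power method, paying attention to the orientation of the limit and to the doubled exponent in the second estimate. Writing an SVD $C = U\Sigma V^T$, with $u_i$, $v_i$ the columns of $U$, $V$, the matrix $C^TC = V\Sigma^2 V^T$ has eigenpairs $(\sigma_i^2, v_i)$, so $(C^TC)^{-1}$ has dominant eigenvalue $\sigma_q^{-2}$ with eigenvector $v_q$. Expanding the starting point as $s_0 = \sum_{i=1}^q c_i v_i$ and iterating \eqref{eq:sk_pow} gives $s_k = B_k \sum_{i=1}^q \sigma_i^{-2k} c_i v_i$, where $B_k = \prod_{j=0}^{k-1}\beta_j > 0$ is the accumulated normalization factor and I used $(C^TC)^{-k} v_i = \sigma_i^{-2k} v_i$.

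First I would fix the sign of the limit and rule out degeneracy. Since $u_q^T f_0 = u_q^T C s_0 = \sigma_q\, v_q^T s_0 = \sigma_q c_q$, the orientation hypothesis $u_q^T f_0 > 0$ is exactly $c_q > 0$, so in particular $c_q \neq 0$ and the iteration is not degenerate. Set $\rho = \sigma_q/\sigma_{q-1} < 1$ and factor out the dominant term, writing $s_k = \gamma_k(c_q v_q + \epsilon_k)$ with $\gamma_k = B_k\sigma_q^{-2k} > 0$ and $\epsilon_k = \sum_{i<q}(\sigma_q/\sigma_i)^{2k} c_i v_i$. Because $\sigma_q/\sigma_i \le \rho$ for every $i < q$, the remainder obeys $\|\epsilon_k\| = O(\rho^{2k})$; and since $\epsilon_k \perp v_q$, the constraint $\|s_k\| = 1$ forces $\gamma_k = (c_q^2 + \|\epsilon_k\|^2)^{-1/2}$. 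A first-order expansion of this square root then yields $\|s_k - v_q\| = O(\rho^{2k})$. Applying $C$ and using $C v_q = \sigma_q u_q$, the first estimate follows from $f_k - \sigma_q u_q = C(s_k - v_q)$ together with $\|f_k - \sigma_q u_q\| \le \sigma_1\|s_k - v_q\|$.

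The second estimate is the subtle one, and getting the $O(\rho^{4k})$ rate right is the main obstacle. I would compute the quadratic form $\|f_k\|^2 = s_k^T C^TC s_k = \sum_{i=1}^q \sigma_i^2 (\gamma_k (\sigma_q/\sigma_i)^{2k} c_i)^2$ directly, which after factoring $\sigma_q^2$ gives
\[
   \|f_k\|^2 = \sigma_q^2\,\frac{c_q^2 + \sum_{i<q}(\sigma_q/\sigma_i)^{4k-2}c_i^2}{c_q^2 + \sum_{i<q}(\sigma_q/\sigma_i)^{4k}c_i^2}.
\]
Both correction sums are $O(\rho^{4k})$, so $\|f_k\|^2 - \sigma_q^2 = O(\rho^{4k})$, and dividing by $\|f_k\| + \sigma_q \to 2\sigma_q > 0$ gives $|\,\|f_k\| - \sigma_q\,| = O(\rho^{4k})$. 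The reason the exponent doubles, rather than merely inheriting the $2k$ of the eigenvector, is that $\epsilon_k$ is orthogonal to $v_q$, so no term linear in $\epsilon_k$ survives in the quadratic form $s_k^T C^TC s_k$ and the leading error is genuinely quadratic in the eigenvector error. This is the same mechanism that makes a Rayleigh quotient converge twice as fast as its generating vector, and isolating it cleanly is the only real difficulty in the argument.
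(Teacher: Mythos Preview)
Your proof is correct and follows essentially the same route as the paper: both recognize \eqref{eq:sk_pow} as the normalized power method for $(C^TC)^{-1}$, verify $v_q^Ts_0>0$ from $u_q^Tf_0>0$ via $Cv_q=\sigma_q u_q$, and obtain the first estimate from $f_k-\sigma_q u_q=C(s_k-v_q)$. For the second estimate the paper takes a slightly different tack, rewriting $\|f_k\|^2$ through the recurrence as $s_{k-1}^T(C^TC)^{-1}s_{k-1}/\|(C^TC)^{-1}s_{k-1}\|^2$ and invoking Rayleigh-quotient convergence for both numerator and denominator, but your direct eigenbasis computation is equivalent and makes the quadratic-error mechanism you describe explicit.
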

 
\begin{proof}
As shown in equation \eqref{eq:sk_pow},
the sequence $\{s_k\}$ corresponds to a power method 
for the matrix $(C^TC)^{-1}$ with normalization.
The spectral decomposition of $(C^TC)^{-1}$
can be readily obtained from the SVD $C = U\Sigma V^T$,
$$
   (C^TC)^{-1} = V\Lambda V^T , \qquad
   \Lambda = \mathrm{diag}(\sigma_1^{-2},\ldots,\sigma_q^{-2}) .
$$
By hypotheses, the eigenvalue $\sigma_q^{-2}$ is simple and dominant, and the angle between the respective eigenvector $v_q$
and the initial vector $s_0$ is acute.
Indeed, from the identity $Cv_q = \sigma_q u_q$ we obtain
$$
   v_q^Ts_0 = \sigma_q^{-2} v_q^T C^TC s_0 =
   \sigma_q^{-1} u_q^T f_0 > 0 .
$$
For notational simplicity let $\rho = \sigma_q^2/\sigma_{q-1}^2$.
Noting that $(C^TC)^{-1}$ is symmetric and positive definite, 
classical results on convergence properties of the power method 
\cite[\S 8.2]{matcomp} give us immediately the 
asymptotic convergence estimates
$$
   \| s_k - v_q \| = O(\rho^{k}) , 
   \qquad
   s_k^T (C^TC)^{-1}s_k  - \sigma_q^{-2} 
   = O(\rho^{2k}) .
$$
The first part of the claim follows by the inequality
$$
   \| f_k - \sigma_q u_q \| = 
   \| C(s_k - v_q) \| \leq \sigma_1 \| s_k - v_q \| .
$$
Finally, using again \eqref{eq:sk_pow} we get 
\begin{align*}
   \|f_k\|^2 = s_{k}^TC^TCs_{k}
   & = \frac{s_{k-1}^T(C^TC)^{-1}C^TC(C^TC)^{-1}s_{k-1}}{\|(C^TC)^{-1}s_{k-1}\|^2} \\ 
   & = \frac{s_{k-1}^T(C^TC)^{-1}s_{k-1}}{\|(C^TC)^{-1}s_{k-1}\|^2}  
   = \frac{\sigma_q^{-2} + O(\rho^{2k})}
   {\sigma_q^{-4} + O(\rho^{2k})}  
   = \sigma_q^{2} + O(\rho^{2k}) ,
\end{align*}
and the proof is complete.
\qed
\end{proof}

\medskip
{\bf Acknowledgements.}
The first author acknowledges the support received by 
Istituto Nazionale di Alta Matematica (INdAM, Italy) 
for his research. The work of the second author has been partly 
supported
by a student research grant by University of Udine, Italy,
and performed during a visit at Vrije Universiteit Brussel, Belgium.
Both authors thank Prof. I. Markovsky for his ospitality and advice.


%

%

\end{document}